\theoremstyle{definition}
	\newtheorem{thm}{Theorem}
	\newtheorem{lem}[thm]{Lemma}
\newcommand\myshade{85}
\colorlet{mylinkcolor}{red}
\colorlet{mycitecolor}{blue}
\colorlet{myurlcolor}{Aquamarine}
\newcommand{\lhat}{{\widehat{\lambda}}}
\newcommand{\Mhat}{{\widehat{M}}}
\newcommand{\Lhat}{{\widehat{\Lambda}}}
\newcommand{\ds}{\displaystyle}
\numberwithin{equation}{section}
\newcommand{\ignore}[1]{}
\begin{document}
\thispagestyle{empty}
\title{A refinement of and a companion to MacMahon's partition identity}

\author{Matthew C. Russell\footnote{University of Illinois Urbana-Champaign. \url{mcr39@illinois.edu}}}

\maketitle

\begin{abstract}
We provide a refinement of MacMahon's partition identity on sequence-avoiding partitions, and use it to produce another mod 6 partition identity. In addition, we show that our technique also extends to cover Andrews's generalization of MacMahon's identity. Our proofs are bijective in nature, exploiting a theorem of Xiong and Keith.
\end{abstract}

\section{Introduction}

In volume 2 of his seminal textbook {\it{Combinatory Analysis}} from 1916~\cite{MacBook}, Major Percy MacMahon proved the following mod 6 partition identity:
\begin{thm}[MacMahon]
Let $n$ be a nonnegative integer.
\begin{itemize}\label{MacM}
    \item Let $A_1(n)$ be the number of partitions of $n$ into parts congruent to $0,$ $2,$ $3,$ or $4 \pmod 6$.
    \item Let $A_2(n)$ be the number of partitions of $n$ where no part occurs exactly once.
    \item Let $A_3(n)$ be the number of partitions of $n$ where no consecutive integers appear as parts, and all parts are at least 2.
\end{itemize}
Then, $A_1(n)=A_2(n)=A_3(n)$.
\end{thm}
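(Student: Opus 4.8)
The plan is to prove the two equalities $A_1(n)=A_2(n)$ and $A_2(n)=A_3(n)$ separately.

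\textbf{Step 1 ($A_1=A_2$).} Each part-value $v$ in a partition counted by $A_2$ either is absent or occurs at least twice, so
\[
\sum_{n\ge 0}A_2(n)\,q^n=\prod_{v\ge 1}\Bigl(1+\frac{q^{2v}}{1-q^v}\Bigr)=\prod_{v\ge 1}\frac{1-q^v+q^{2v}}{1-q^v}=\prod_{v\ge 1}\frac{1+q^{3v}}{1-q^{2v}},
\]
where the last equality uses $(1+x)(1-x+x^2)=1+x^3$. Euler's theorem (partitions into distinct parts $=$ partitions into odd parts), applied with $q\mapsto q^3$, turns $\prod_{v\ge 1}(1+q^{3v})$ into $\prod_{v\ge 1}(1-q^{3(2v-1)})^{-1}$, the generating function for partitions into parts $\equiv 3\pmod 6$, while $\prod_{v\ge 1}(1-q^{2v})^{-1}$ generates partitions into even parts, i.e.\ parts $\equiv 0,2,4\pmod 6$; since $\{0,2,3,4\}=\{0,2,4\}\sqcup\{3\}$ modulo $6$, the product equals $\sum_n A_1(n)q^n$. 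To make this bijective, decompose the multiplicity $m_v$ of each value $v$ as $m_v=2a_v+3\epsilon_v$ with $\epsilon_v\in\{0,1\}$ of the same parity as $m_v$ (legitimate, since an odd $m_v$ is at least $3$): this records a partition into even parts $2v$ (with multiplicity $a_v$) together with a partition into \emph{distinct} multiples of $3$ (the parts $3v$ for which $\epsilon_v=1$); applying the bijective form of Euler's theorem to the latter and merging the two partitions yields a partition of $n$ into parts $\equiv 0,2,3,4\pmod 6$. The construction is clearly reversible.

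\textbf{Step 2 ($A_2=A_3$).} I claim conjugation of Young diagrams maps the class counted by $A_3$ bijectively onto the class counted by $A_2$. Write $\lambda'$ for the conjugate of $\lambda$ and adopt the conventions $\lambda'_0=\infty$ and $\lambda'_i=0$ once $i$ exceeds the number of parts of $\lambda'$. A value $j\ge 1$ occurs as a part of $\lambda$ exactly when $\lambda'_j>\lambda'_{j+1}$. Consequently, a part of $\lambda'$ of multiplicity exactly one, sitting at position $i$ (so that $\lambda'_{i-1}>\lambda'_i>\lambda'_{i+1}$), arises in precisely one of two ways: if $i=1$, it records that $1$ is a part of $\lambda$; if $i\ge 2$, it records that the consecutive values $i-1$ and $i$ both occur as parts of $\lambda$. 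Conversely, each such feature of $\lambda$ forces a multiplicity-one part of $\lambda'$. Hence $\lambda'$ has no part of multiplicity exactly one if and only if $\lambda$ has all parts $\ge 2$ and contains no two consecutive values, i.e.\ $\lambda\in A_3\iff\lambda'\in A_2$; as conjugation is a size-preserving involution, $A_3(n)=A_2(n)$.

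The generating-function manipulations and Euler's theorem make Step 1 essentially mechanical, and the bijective refinement needs only the parity decomposition $m_v=2a_v+3\epsilon_v$. The step that requires genuine care is the conjugation analysis in Step 2: one must pin down exactly which positions $i$ of $\lambda'$ can host a multiplicity-one part, treat the boundary case $i=1$ correctly (equivalently the largest part of $\lambda'$, corresponding to the part $1$ of $\lambda$), and check that these are the only obstructions — after which the equivalence $\lambda\in A_3\iff\lambda'\in A_2$ drops out.
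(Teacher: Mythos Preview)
Your proof is correct. Step~2 is exactly the conjugation argument the paper flags as ``relatively trivial'' immediately after stating the theorem, and you have spelled out the details cleanly, including the boundary case $i=1$.

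For Step~1 the paper does not give its own direct proof of $A_1=A_2$; it treats Theorem~\ref{MacM} as classical and instead proves the refinement Theorem~\ref{thm:orig}, from which $A_1=A_2$ follows. Your argument---both the generating-function manipulation and the bijection via the decomposition $m_v=2a_v+3\epsilon_v$ together with Euler on the distinct multiples of~$3$---is precisely the Fu--Sellers bijection the paper summarizes in the introduction (interpreting the residues $\{0,2,3,4\}\bmod 6$ as ``even or $3\bmod 6$''). The paper's own machinery takes a genuinely different route: it reads the residues as ``$0\bmod 3$ or $\pm 2\bmod 6$'', halves the $\pm 2\bmod 6$ parts to get a $3$-regular partition, applies the Xiong--Keith bijection (Theorem~\ref{XKThm}) to control the $3$-alternating sum type, and only then conjugates. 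That extra structure is what buys the refinement tracking the number of parts in each residue class mod~$3$; your approach is simpler and fully adequate for the unrefined statement, but does not give that additional statistic.
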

Note that the equality $A_2(n)=A_3(n)$ is relatively trivial, as can be seen by taking the conjugates of the partitions counted by $A_2(n)$ or $A_3(n)$.  Partitions satisfying the condition for $A_3(n)$ that no consecutive integers appear as parts are sometimes described as ``sequence-avoiding partitions''.

A half-century later, this was generalized by George E. Andrews~\cite{And}:

\begin{thm}[Andrews]\label{thm:And} Let $r$ be a positive integer. The number of partitions of $n$ in which any part with odd multiplicity must appear at least $2r+ 1$ times equals the number of partitions of $n$ where all parts must be even or congruent to $2r+ 1  \pmod {4r+ 2}$. \end{thm}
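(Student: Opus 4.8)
Write $B_1(n)$ for the first quantity (partitions of $n$ in which every part whose multiplicity is odd has multiplicity at least $2r+1$) and $B_2(n)$ for the second (partitions of $n$ with all parts even or $\equiv 2r+1 \pmod{4r+2}$). The plan is to first verify $B_1(n)=B_2(n)$ by an elementary product manipulation that collapses to Euler's ``distinct parts $=$ odd parts'' identity, and then to upgrade this computation into the kind of explicit bijection the introduction advertises. For the generating function of $B_1$, observe that a part of size $k$ may occur with multiplicity in $\{0,2,4,\dots\}\cup\{2r+1,2r+3,\dots\}$, so its local generating function is
\[
\sum_{\substack{m\ge 0\\ m\ \mathrm{even}}}q^{mk}+\sum_{\substack{m\ge 2r+1\\ m\ \mathrm{odd}}}q^{mk}=\frac{1}{1-q^{2k}}+\frac{q^{(2r+1)k}}{1-q^{2k}}=\frac{1+q^{(2r+1)k}}{1-q^{2k}},
\]
and hence $\sum_{n\ge0}B_1(n)q^n=\prod_{k\ge1}\frac{1+q^{(2r+1)k}}{1-q^{2k}}$.

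For $B_2$, note that since $2r+1$ is odd no positive integer is simultaneously even and $\equiv 2r+1\pmod{4r+2}$, and the admissible parts in the second residue class are exactly the numbers $(2j+1)(2r+1)$ with $j\ge0$; therefore $\sum_{n\ge0}B_2(n)q^n=\prod_{k\ge1}\frac{1}{1-q^{2k}}\cdot\prod_{j\ge0}\frac{1}{1-q^{(2j+1)(2r+1)}}$. Cancelling the common factor $\prod_{k\ge1}(1-q^{2k})^{-1}$, the claim $B_1(n)=B_2(n)$ reduces to
\[
\prod_{k\ge1}\bigl(1+q^{(2r+1)k}\bigr)=\prod_{j\ge0}\frac{1}{1-q^{(2j+1)(2r+1)}},
\]
which, after the substitution $Q=q^{2r+1}$, is precisely Euler's identity $\prod_{k\ge1}(1+Q^k)=\prod_{j\ge0}(1-Q^{2j+1})^{-1}$. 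Note that when $r=1$ this recovers the equivalence of (the obvious reformulations of) $A_1$ and $A_2$ in MacMahon's theorem.

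To make the argument bijective, I would decompose a partition $\lambda$ counted by $B_1(n)$ part by part: if the size-$k$ part has even multiplicity $2s$, record $s$ copies of the even part $2k$; if it has odd multiplicity $2r+1+2t$, record $t$ copies of $2k$ together with the single ``block value'' $k$. The block values form a set of \emph{distinct} positive integers, to which I apply Euler's bijection (distinct parts to odd parts) and then rescale every resulting odd part $o$ to $o(2r+1)$; since $o$ is odd, $o(2r+1)\equiv 2r+1\pmod{4r+2}$, so together with the recorded even parts $2k$ this produces a partition counted by $B_2(n)$, and a quick weight count shows it is $n$-preserving. The inverse peels off the parts of a $B_2$-partition lying in the residue class $2r+1\pmod{4r+2}$, divides them by $2r+1$, runs Euler backwards to recover a set $S$ of distinct integers, and reassembles each size-$k$ part with multiplicity $2c_k$ (where $c_k$ counts the copies of $2k$ among the even parts) plus an extra $2r+1$ when $k\in S$. \textbf{The step requiring care} — and the only genuine obstacle — is confirming that this reassembly is well defined and two-sided inverse to the forward map: one must check that distinct source part-sizes never collide, that the ``even'' and ``block'' contributions to a single multiplicity cannot interfere (they occupy complementary residues mod $2$, so the parity of the multiplicity detects membership in $S$), and that the rescaling $o\mapsto o(2r+1)$ really is a bijection from partitions into odd parts onto partitions into parts $\equiv 2r+1\pmod{4r+2}$. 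Everything else is bookkeeping; and this is exactly the point where the Xiong--Keith theorem cited in the introduction can be inserted in place of the ad hoc Euler step, yielding the uniform treatment that also handles the MacMahon refinement.
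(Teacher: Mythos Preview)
Your generating-function argument is correct and is exactly what the paper attributes to Andrews (``relatively straightforward manipulation of generating functions''), so there is no issue with validity. Your bijective upgrade is also correct; it is essentially the Fu--Sellers bijection that the paper summarizes in its introduction: read ``even or $\equiv 2r+1\pmod{4r+2}$'' literally, pair off even multiplicities into parts $2k$, and handle the residual odd block via Euler's distinct/odd bijection rescaled by $2r+1$.

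Where your write-up diverges from the paper is in the last sentence. The paper's own route to Theorem~\ref{thm:And} (via the refinement Theorem~\ref{thm:gen}) is \emph{not} obtained by swapping Xiong--Keith in for your Euler step. The paper instead reads the congruence side as ``$\equiv 0\pmod{2r+1}$ or $\equiv 2,4,\dots,4r\pmod{4r+2}$'': the multiples of $2r+1$ are split into $2r+1$ equal copies, while the remaining even parts are halved to give a $(2r+1)$-regular partition, sent by Xiong--Keith to a partition with each multiplicity at most $2r$, then each part is duplicated, the two pieces are merged, and finally one conjugates. The payoff of this different decomposition is the refinement by the full $(2r+1)$-length type (the parameters $m_1,\dots,m_{2r}$), which your Fu--Sellers-style bijection does not see; conversely, your argument is shorter and needs nothing beyond Euler. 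So: correct proof, but a genuinely different bijection from the one the paper develops, and your closing remark about ``inserting Xiong--Keith'' understates the reorganization required.
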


The proofs of MacMahon and Andrews involved relatively straightforward manipulation of generating functions, as did the proofs of M.~V.~Subbarao~\cite{Sub} in his further generalizations of these identities. However, it was not until 2007 that the first bijective proof of MacMahon's identity was provided (by Andrews, Henrik Eriksson, Fedor Petrov, and Dan Romik~\cite{AEPR}). Subsequently, in 2014, Shishuo Fu and James A. Sellers~\cite{FS} gave an alternative bijective proof (also covering the extensions of Andrews and of Subbarao, and extensions of their own). Beaullah Mugwangwavari and Darlison Nyirenda~\cite{MN,NM} have recently given new bijective proofs of many of these identities.

The bijection of Fu and Sellers~\cite{FS} is the one that is most pertinent for our purposes. Their key step was to interpret $0,$ $2,$ $3,$ or $4 \pmod 6$ as $0 \pmod 2$ or $3 \pmod 6$. Then, even parts $2j$ were replaced with two copies of $j$, while parts that are 3 times an odd integer were sent to partitions where every part occurred exactly three times, following Euler's identity. Instead, our basic plan will be to think of $0,$ $2,$ $3,$ or $4 \pmod 6$ as $0 \pmod 3$ or $\pm2 \pmod 6$. While the parts $0 \pmod 3$ are easy enough to deal with, working with the parts $\pm2 \pmod 6$ is a more involved process. 

Our main tool (presented as Theorem~\ref{XKThm} below) is a result due to Xinhua Xiong and William Keith~\cite{XK}. The process is similar to that of the work of Shashank Kanade, Debajyoti Nandi, and the author~\cite{KNR}. It will allow us to obtain a refinement of Theorem~\ref{MacM}, shown below as Theorem~\ref{thm:orig}. This refinement can then be used to prove companion results, including a three-color partition identity, and the following companion result to Theorem~\ref{MacM}:
\begin{thm}\label{IntroFriend}
Let $n$ be a nonnegative integer.
\begin{itemize}
    \item Let $C_1(n)$ be the number of partitions of $n$ into parts congruent to $0,$ $1,$ $3,$ or $5 \pmod 6$.
    \item Let $C_2(n)$ be the number of partitions of $n$ that satisfy the following conditions:
    \begin{itemize}
        \item If adjacent parts differ by exactly 1, the smaller part cannot be $\equiv 1 \pmod 3$. (Equivalently, if adjacent parts differ by exactly 1, their sum cannot be $\equiv 0 \pmod 3$).
        \item If adjacent parts differ by exactly 2, the smaller part must be $\equiv 2 \pmod 3$. (Equivalently, if adjacent parts differ by exactly 2, their sum must be $\equiv 0 \pmod 3$).
        \item No parts are equal to 2.
    \end{itemize}
\end{itemize}
Then, $C_1(n)=C_2(n)$.
\end{thm}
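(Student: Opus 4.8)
The plan is to deduce Theorem~\ref{IntroFriend} as a companion to the refined MacMahon identity, Theorem~\ref{thm:orig}, by running the very same construction with the residue class $\pm 2 \pmod 6$ replaced throughout by $\pm 1 \pmod 6$. The point of departure is that a partition counted by $C_1(n)$ is exactly a partition into parts $\equiv 0 \pmod 3$ together with a partition into parts coprime to $6$ (that is, $\equiv \pm 1 \pmod 6$); on the generating-function side this is the factorization
\[
\sum_{n \ge 0} C_1(n)\, q^n \;=\; \Biggl(\,\prod_{k \ge 1}\frac{1}{1-q^{3k}}\Biggr)\Biggl(\,\prod_{\substack{j \ge 1 \\ \gcd(j,6)=1}}\frac{1}{1-q^{j}}\Biggr),
\]
whose first factor is shared with the generating function for $A_1(n)$ from Theorem~\ref{MacM}, and whose second factor is the $\pm 1 \pmod 6$ analogue of the $\pm 2 \pmod 6$ factor occurring there.

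First I would set up the bijection exactly as for Theorem~\ref{thm:orig}: starting from a partition of $n$ counted by $C_1(n)$, peel off the sub-partition of its parts $\equiv 0 \pmod 3$, push the remaining sub-partition (into parts $\equiv \pm 1 \pmod 6$) through the Xiong--Keith correspondence of Theorem~\ref{XKThm}, and then merge the multiples of $3$ back in, keeping track of the gaps produced between neighbouring parts. Since Theorem~\ref{XKThm} applies to a family of residue classes containing both $\pm 2 \pmod 6$ and $\pm 1 \pmod 6$, and since the treatment of the parts $\equiv 0 \pmod 3$ does not care which of these two classes is in play, the architecture of the argument is identical to that of Theorem~\ref{thm:orig}; only the arithmetic of the conditions that come out at the end changes.

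The crux is then to check that those conditions are precisely the three defining $C_2(n)$. One inspects the possible gaps between neighbouring parts $a > b$ of the reassembled partition: gaps of $0$ or of size $\ge 3$ should remain unconstrained, just as with the sequence-avoiding partitions in Theorem~\ref{MacM}; the boundary effect of the smallest admissible ``coprime'' part should reduce to the prohibition of the single value $2$; and the $\pm 1 \pmod 6$ arithmetic, traced through the Xiong--Keith map, should force $b \not\equiv 1 \pmod 3$ when $a-b=1$ and $b \equiv 2 \pmod 3$ when $a-b=2$. As a heuristic check, in passing from $\pm 2$ to $\pm 1$ one trades the three admissible gap-$2$ residue classes of the MacMahon picture for just one, while gaining two admissible gap-$1$ residue classes, so that the total number of admissible (small-gap, residue-of-smaller-part) pairs stays at three; this is the mechanism by which MacMahon's ``no two neighbouring parts differ by $1$, and all parts are at least $2$'' metamorphoses into congruence-tagged gap conditions rather than into something strictly looser or tighter. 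The two ``sum'' reformulations in the statement are then automatic, since $a+b = 2b + (a-b)$ for neighbours $a>b$.

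The step I expect to be the genuine obstacle is this verification together with the re-insertion of the multiples of $3$: one must control, one insertion at a time, how placing a part $\equiv 0 \pmod 3$ between two existing parts changes both adjacent gaps, and confirm that the congruence conditions survive and that no part equal to $2$ is ever manufactured. This is the same delicate bookkeeping that already underlies Theorem~\ref{thm:orig}; once it is secured there, the companion follows by the parallel computation, and summing over all $n$ yields $C_1(n) = C_2(n)$ for every nonnegative integer $n$, via an explicit bijection as the abstract promises.
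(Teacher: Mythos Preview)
Your plan diverges from the paper's and, as written, has a real obstruction. In the proof of Theorem~\ref{thm:orig} the parts $\equiv \pm 2 \pmod 6$ are first \emph{halved}, and that halving is not cosmetic: it turns that sub-partition into an arbitrary $3$-regular partition, so that Theorem~\ref{XKThm} with $m=3$ surjects onto all partitions with multiplicities at most $2$. If you replace $\pm 2$ by $\pm 1$ and feed the parts $\equiv \pm 1 \pmod 6$ directly into Theorem~\ref{XKThm}, you are applying the bijection only to the \emph{odd} $3$-regular partitions, a proper subset of the domain; the image is then a proper subset of the multiplicity-$\le 2$ partitions that is not cut out by the $3$-alternating sum type alone, and the subsequent ``duplicate, add triples, conjugate'' pipeline no longer lands on a clean gap condition. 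No rescaling repairs this: dividing by $2$ is unavailable, and no other divisor sends $\{1,5\}\pmod 6$ onto all of $\{1,2\}\pmod 3$. (Your sentence ``since Theorem~\ref{XKThm} applies to a family of residue classes containing both $\pm 2\pmod 6$ and $\pm 1\pmod 6$'' suggests a misreading: that theorem is about $m$-regular partitions for a single modulus, not about arbitrary prescribed residue sets.) Your sketch also omits the conjugation step, which in the paper is precisely what converts multiplicity constraints into gap constraints; without it there are no ``gaps between neighbouring parts'' to analyse.

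The paper avoids all of this by never rerunning Xiong--Keith. It uses the refinement in Theorem~\ref{thm:orig}---which already tracks residues mod $3$ on the sequence-avoiding side---to pass to a three-colour identity (send $3k,\,3k-1,\,3k+1$ to $k_b,\,k_a,\,k_c$), and then \emph{redilates with a different map} $k_a\mapsto 3k-2$, $k_b\mapsto 3k$, $k_c\mapsto 3k+2$. Under this second dilation the forbidden adjacencies $j_a+j_b$, $j_b+j_c$, $j_c+(j{+}1)_a$ become exactly the three $C_2$ conditions, and the part $2$ is excluded simply because nothing maps to it. So the companion identity is obtained by a change of coordinates applied to the already-proved refined MacMahon identity, not by building a fresh bijection from $C_1$-partitions.
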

To illustrate this theorem, we observe that $C_1(11)=C_2(11)=15$. The partitions counted by $C_1(11)$ are $11$, $9+1+1$, $7+3+1$, $7+1+1+1+1$, $6+5$, $6+3+1+1$, $6+1+1+1+1+1$, $5+5+1$, $5+3+3$, $5+3+1+1+1$, $5+1+1+1+1+1+1$, $3+3+3+1+1$, $3+3+1+1+1+1+1$, $3+1+1+1+1+1+1+1+1$, and $1+1+1+1+1+1+1+1+1+1+1$, while the partitions counted by $C_2(11)$ are $11$, $10+1$, $9+1+1$, $8+3$, $8+1+1+1$, $7+4$, $7+1+1+1+1$, $6+5$, $6+1+1+1+1+1$, $5+5+1$, $5+1+1+1+1+1+1$, $4+4+3$, $4+4+1+1+1$, $4+1+1+1+1+1+1+1$, and $1+1+1+1+1+1+1+1+1+1+1$.

We then conclude by applying the same technique to obtain a similar refinement of Theorem~\ref{thm:And}.

\section{Preliminaries}

Let $n$ be a non-negative integer. A {\it partition} of $n$ is a list of integers $\left(\lambda_1,\lambda_2,\dots,\lambda_k\right)$ such that $ \lambda_1+\lambda_2+\cdots+\lambda_k = n$ and  $\lambda_1 \ge \lambda_2 \ge \cdots \ge \lambda_k \ge 1$. We will frequently write this as $\lambda_1+\lambda_2+\cdots+\lambda_k$. The {\it weight} of a partition $\lambda$, $|\lambda|$, is the sum of the parts. We also will use $|\lambda|_{i(m)}$ to represent the sum of the parts of $\lambda$ that are congruent to $i\pmod m$. As an example, if $\lambda=9+8+8+5+2+1$, then $|\lambda|_{2(6)}=8+8+2=18$. The multiplicity of a part $i$ in a partition $\lambda$ is the number of times it occurs: the part 8 in the preceding example has multiplicity 2.

The length of a partition $\lambda$, $\ell(\lambda)$, is the number of parts of $\lambda$. The {\it conjugate} of a partition $\left(\lambda_1,\lambda_2,\dots,\lambda_k\right)$ is a second partition $\left(\mu_1,\mu_2,\dots,\mu_j\right)$, where $\mu_i$ is chosen to equal the number of parts of $\mu$ that are greater than or equal to $i$. If $m$ is a positive integer, a partition in which no parts are congruent to $0 \pmod m$ is called $m$-regular.

We will occasionally want to present our results in generating function form as formal power series. For example, if $\Lambda$ is the set of all sequence-avoiding partitions with no ones, then Theorem~\ref{MacM} is equivalent to the statement
\begin{equation}
\sum_{\lambda \in \Lambda} q^{|\lambda|} = \frac{1}{\left(q^2,q^3,q^4,q^6;q^6\right)_\infty}.
\end{equation}
Here, we employ the standard $q$-series notation $\left(a,q\right)_\infty = \prod_{j\ge 0} \left(1-aq^j\right)$, and \begin{equation}
\left(a_1;q\right)=\left(a_1,a_2,\dots,a_k;q\right)\left(a_2;q\right)\cdots\left(a_k;q\right).
\end{equation}

Following Xiong and Keith~\cite{XK}, we define the {\it $m$-length type} of a partition to be the $(m-1)$-tuple $\left(\alpha_1,\alpha_2,\dots,\alpha_{m-1}\right)$, where there are $\alpha_i$ parts congruent to $i \pmod m$. Furthermore, we define the {\it $m$-alternating sum type} of a partition to be the $(m-1)$-tuple $\left(M_1-M_2,M_2-M_3,\dots,M_{m-1}-M_{m}\right)$, where $M_i$ is the sum of all parts in the partition whose index is congruent to $i \pmod m$. Note that we define the $m$-length type of a partition even when it is not $m$-regular, and we define the $m$-alternating sum type of a partition even if it has parts that occur at least $m$ times.

We can now present the following remarkable theorem of Xiong and Keith~\cite{XK}, which itself extends a theorem of Igor Pak and Alexander Postnikov~\cite{PP}:

\begin{thm}[Xiong and Keith]\label{XKThm}
Consider a modulus $m$ and a nonnegative integer $n$. The number of $m$-regular partitions of $n$ with $m$-length type $\left(\alpha_1,\alpha_2,\dots,\alpha_{m-1}\right)$ equals the number of partitions of $n$ with no part occurring $m$ or more times with $m$-alternating sum type $\left(\alpha_1,\alpha_2,\dots,\alpha_{m-1}\right).$
\end{thm}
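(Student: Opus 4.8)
The plan is to reduce Theorem~\ref{XKThm}, by conjugation, to a cleaner equidistribution statement, and then to establish that statement by a bijection refining Glaisher's classical map.

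\textbf{Conjugation.} I would first replace the right-hand family by the family of conjugate partitions. Fix a partition $\mu$ in which no part occurs $m$ or more times and let $\mu'$ be its conjugate, with parts $\mu'_1\ge\mu'_2\ge\cdots\ge\mu'_{\ell}\ge 1$ (and $\mu'_{\ell+1}=0$). An elementary count shows that for every positive integer $v$ and every $1\le i\le m-1$, the number of elements of $\{1,2,\dots,v\}$ congruent to $i\pmod m$ exceeds the number congruent to $i+1\pmod m$ by $1$ if $v\equiv i\pmod m$ and by $0$ otherwise. Letting $M_i$ be the sum of the parts of $\mu$ sitting in positions $\equiv i\pmod m$ and using $\mu_t=\#\{j:\mu'_j\ge t\}$, summing that count over $v=\mu'_j$ gives
\[
M_i-M_{i+1}=\#\{\,j:\mu'_j\equiv i\pmod m\,\},
\]
so the $m$-alternating sum type of $\mu$ is exactly the $m$-length type of $\mu'$. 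Moreover ``no part of $\mu$ occurs $m$ or more times'' is precisely the condition that every consecutive difference $\mu'_j-\mu'_{j+1}$ is at most $m-1$ (which for $j=\ell$ says the smallest part is $\le m-1$). Thus the right-hand count of Theorem~\ref{XKThm} equals the number of partitions of $n$ all of whose consecutive part-differences, and whose smallest part, are at most $m-1$ --- call these $m$-\emph{gap-bounded} --- having $m$-length type $(\alpha_1,\dots,\alpha_{m-1})$. The theorem then reads: the number of $m$-regular partitions of $n$ with a given $m$-length type equals the number of $m$-gap-bounded partitions of $n$ with that same $m$-length type.

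\textbf{The bijection.} An $m$-regular partition is the same datum as $(\alpha_i;a^{(i)})_{1\le i\le m-1}$, where $\alpha_i$ counts the parts $\equiv i\pmod m$ and $a^{(i)}=(a^{(i)}_1\ge\cdots\ge a^{(i)}_{\alpha_i}\ge 0)$ records those parts written as $i+m\,a^{(i)}_t$, the weight being $\sum_i\bigl(i\alpha_i+m|a^{(i)}|\bigr)$. I would build the associated $m$-gap-bounded partition by sweeping through the residues $i=m-1,m-2,\dots,1$, at each stage inserting $\alpha_i$ parts congruent to $i$ while ``carrying'' the mass recorded by $a^{(i)}$ upward into larger parts so as to keep all gaps $\le m-1$ and preserve the total weight; gap-boundedness at the bottom forces the smallest part to be $\le m-1$ automatically. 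The inverse peels off, from the top down, the maximal blocks of parts lying in each residue class and undoes the carries.

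\textbf{Main obstacle.} The real work is in the second step: arranging the insertion-and-carry procedure so that it is simultaneously weight-preserving, $m$-length-type-preserving, and invertible. This is the substance of the Xiong--Keith refinement, and it genuinely goes beyond the naive Glaisher correspondence --- already for $m=2$ that map sends $3+1+1+1$ (four odd parts) to $3+2+1$, whose alternating sum is $2$, not $4$, so a finer, Sylvester-type bijection is needed. Should the bijective route prove stubborn, the reduction above also opens a generating-function route: the $m$-regular side is the transparent product $\prod_{i=1}^{m-1}\prod_{k\ge 0}\bigl(1-x_i q^{\,i+km}\bigr)^{-1}$ with $x_i$ marking parts $\equiv i\pmod m$, and one would then expand the generating function for $m$-gap-bounded partitions --- organized by the set of their distinct part-sizes, which for general $m$ is constrained but need not be an interval --- and check that it collapses to the same product.
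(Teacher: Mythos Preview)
The paper does not prove Theorem~\ref{XKThm}; it is quoted from Xiong--Keith~\cite{XK}, and the only information given about the proof is that it ``uses a modification of a bijection of Dieter Stockhofe~\cite{Sto}.'' So there is no in-paper proof to compare your proposal against, only that one-line pointer.

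That said, here is how your proposal lines up. Your conjugation step is correct and is exactly Lemma~\ref{lem:conj} of the paper (also attributed to Xiong--Keith): you have given a clean self-contained proof that the $m$-alternating sum type of $\mu$ equals the $m$-length type of $\mu'$, and your reformulation of the theorem as ``$m$-regular partitions and $m$-gap-bounded partitions are equinumerous with the same $m$-length type'' is valid.

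The gap is your second step. You explicitly describe the bijection only as a sketch (``sweeping through the residues \dots inserting \dots while carrying''), you correctly observe that naive Glaisher fails already at $m=2$, and you call the needed construction ``the substance of the Xiong--Keith refinement'' without supplying it. So the proposal is not a proof but a reduction plus an outline. Moreover, the route that Xiong and Keith actually take is not a Glaisher/Sylvester-style residue-by-residue insertion as you suggest; according to the paper it is a modification of Stockhofe's bijection, which is a rather different and more intricate machine (Stockhofe's map rearranges partitions by successively stripping and reinserting certain hooks/rim pieces). If you want to carry your approach through, you would either need to locate an $m$-length-type-preserving bijection between $m$-regular and $m$-gap-bounded partitions in the literature (for $m=2$ this is the classical Sylvester/fishhook bijection, but for general $m$ the only reference the paper points to is Stockhofe via Xiong--Keith), or construct one and verify invertibility, which is precisely the nontrivial content you have deferred. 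Your alternative generating-function route is also only sketched; the $m$-gap-bounded side does not factor as obviously as you suggest, since for $m\ge 3$ the set of distinct part sizes need not be an interval.
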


The proof of Xiong and Keith's theorem uses a modification of a bijection of Dieter Stockhofe~\cite{Sto}. A translation of Stockhofe's thesis~\cite{Sto} from German into English was helpfully provided by Keith as an appendix to his thesis~\cite{KeiThe}. Fu, Dazhao Tang, and Ae Ja Yee~\cite{FTY} have recently refined Theorem~\ref{XKThm}. Ya Gao and Xiong~\cite{GX} provided an analytic proof of certain cases of Theorem~\ref{XKThm}, and Isaac Konan~\cite{Kon} generalized Theorem~\ref{XKThm} to certain families of colored partitions. Cristina Ballantine and Amanda Welch~\cite{BW1,BW2} have used Theorem~\ref{XKThm} to prove certain Beck-type identities.

We will also require the following lemma of Xiong and Keith. 

\begin{lem}[Xiong and Keith] \label{lem:conj}
The conjugates of partitions $\lambda$ with $m$-alternating sum type $\left(s_1,\dots,s_{m-1}\right)$ are precisely those partitions of $m$-length type $\left(s_1,\dots,s_{m-1}\right)$.
\end{lem}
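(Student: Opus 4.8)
The plan is to establish the pointwise statement that, for any partition $\lambda=(\lambda_1,\dots,\lambda_k)$, the $m$-length type of the conjugate $\lambda'$ coincides with the $m$-alternating sum type of $\lambda$ itself (here $\nu'$ denotes the conjugate of a partition $\nu$). Granting this, the lemma follows formally from the fact that conjugation is a weight-preserving involution: conjugating any partition of $m$-alternating sum type $(s_1,\dots,s_{m-1})$ produces a partition of $m$-length type $(s_1,\dots,s_{m-1})$, while if $\mu$ has $m$-length type $(s_1,\dots,s_{m-1})$ then applying the pointwise statement to $\mu'$ shows that $\mu'$ has $m$-alternating sum type $(s_1,\dots,s_{m-1})$ and $\mu=(\mu')'$ is its conjugate. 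Hence the two sets described in the lemma coincide.

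For the pointwise statement I would argue through the Young diagram. Recall the defining property of conjugation: for every integer $j\ge 1$, the number of parts of $\lambda'$ that are $\ge j$ equals $\lambda_j$, with the convention $\lambda_j=0$ for $j>k$. Taking successive differences, the multiplicity of $j$ as a part of $\lambda'$ is $\lambda_j-\lambda_{j+1}\ge 0$. Summing over the parts lying in a fixed residue class mod $m$, the number of parts of $\lambda'$ congruent to $i\pmod m$ is therefore
\[
\alpha_i=\sum_{\substack{j\ge 1\\ j\equiv i\,(m)}}\bigl(\lambda_j-\lambda_{j+1}\bigr).
\]

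It remains to recognize the right-hand side. In this sum the positive terms $+\lambda_j$ run over the indices $j\equiv i\pmod m$, and together they contribute $M_i$; the negative terms $-\lambda_{j+1}$ run over the indices $\equiv i+1\pmod m$, and together they contribute $-M_{i+1}$. Thus $\alpha_i=M_i-M_{i+1}$ for each $i=1,\dots,m-1$, which is exactly the $i$-th coordinate of the $m$-alternating sum type of $\lambda$; at $i=m-1$ the subtracted indices are $\equiv 0\pmod m$, so they contribute $M_m$, matching the final coordinate $M_{m-1}-M_m$. I do not expect a genuine obstacle here: the argument is an elementary reindexing of the conjugation bijection, and the only thing to be careful about is the index bookkeeping at the wrap-around $i=m-1$ together with the convention $\lambda_j=0$ for $j>k$, which is what makes the displayed sums finite and the regrouping legitimate.
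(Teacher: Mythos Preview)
Your argument is correct. The identification of the multiplicity of $j$ in $\lambda'$ with $\lambda_j-\lambda_{j+1}$ is standard, and your regrouping into $M_i-M_{i+1}$ is sound; the only potentially delicate point is whether the substitution $j'=j+1$ misses the term $j'=1$ in $M_{i+1}$, but since $1\le i\le m-1$ forces $i+1\not\equiv 1\pmod m$, no such term exists, and the bookkeeping goes through exactly as you indicate.

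As for comparison with the paper: the paper does not prove this lemma at all. It is stated as a result of Xiong and Keith and simply cited from~\cite{XK}, with no argument given. Your proof is a self-contained elementary verification via the multiplicity formula for conjugates, which is presumably close in spirit to what Xiong and Keith do, but there is nothing in the present paper to compare it against.
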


Here is one more lemma that will be useful for us:

\begin{lem}\label{lem:dup2}
Suppose that $\lambda$ is a partition with $m$-alternating sum type $\left(\Sigma_1,\Sigma_2,\dots,\Sigma_{m-1}\right)$. Then, if we form a new partition $\lhat$ with weight $|\lhat|=(m-1)|\lambda|$ by replacing each part $i$ of $\lambda$ with $m-1$ copies of $i$, then $\lhat$ has $m$-alternating sum type $\left(\Sigma_{m-1},\Sigma_{m-2},\dots,\Sigma_1\right)$.
\end{lem}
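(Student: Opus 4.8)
The plan is to compute the $m$-alternating sum type of $\lhat$ directly, tracking positions modulo $m$. Write $\lambda = (\lambda_1,\lambda_2,\dots,\lambda_k)$ with $\lambda_1 \ge \lambda_2 \ge \cdots \ge \lambda_k \ge 1$, and recall that $\Sigma_i = M_i - M_{i+1}$, where $M_i$ is the sum of the parts of $\lambda$ whose \emph{index} is $\equiv i \pmod m$. Because $\lambda$ is weakly decreasing, listing the $m-1$ copies of each part in the order of the original parts already writes $\lhat$ in weakly decreasing form, so in $\lhat$ the $m-1$ copies of $\lambda_j$ occupy exactly the block of consecutive positions $(j-1)(m-1)+1,\ (j-1)(m-1)+2,\ \dots,\ j(m-1)$. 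Let $\Mhat_i$ be the sum of the parts of $\lhat$ whose index is $\equiv i \pmod m$, so that the $m$-alternating sum type of $\lhat$ has $i$-th entry $\Mhat_i - \Mhat_{i+1}$.

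The key observation is that any $m-1$ consecutive integers realize every residue class modulo $m$ except exactly one, each realized class occurring exactly once. Since the $j$-th block starts at position $(j-1)(m-1)+1$ and $m-1 \equiv -1 \pmod m$, its positions run over the $m-1$ residues $2-j,\ 3-j,\ \dots,\ m-j \pmod m$ — that is, every class except $1-j \pmod m$. Hence the part $\lambda_j$ contributes (once, with value $\lambda_j$) to $\Mhat_i$ precisely when $i \not\equiv 1-j \pmod m$, i.e.\ when $i+j \not\equiv 1 \pmod m$. Summing over all $j$ and removing the terms with $i+j \equiv 1 \pmod m$ gives
\begin{equation*}
\Mhat_i = \sum_{j \,:\, i+j \,\not\equiv\, 1 \,(m)} \lambda_j = |\lambda| - \sum_{j \,\equiv\, 1-i \,(m)} \lambda_j = |\lambda| - M_{m+1-i},
\end{equation*}
where the subscript $m+1-i$ is read in $\{1,\dots,m\}$; in particular $\Mhat_1 = |\lambda| - M_m$ and $\Mhat_m = |\lambda| - M_1$. (As a consistency check, $\sum_{i=1}^{m}\Mhat_i = m|\lambda| - \sum_{i=1}^{m}M_i = (m-1)|\lambda| = |\lhat|$.)

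With this formula the conclusion is a one-line subtraction: for $1 \le i \le m-1$,
\begin{equation*}
\Mhat_i - \Mhat_{i+1} = \bigl(|\lambda|-M_{m+1-i}\bigr) - \bigl(|\lambda|-M_{m-i}\bigr) = M_{m-i} - M_{(m-i)+1} = \Sigma_{m-i},
\end{equation*}
so the $m$-alternating sum type of $\lhat$ is $(\Sigma_{m-1},\Sigma_{m-2},\dots,\Sigma_1)$, as claimed; observe that no regularity or multiplicity hypothesis on $\lambda$ entered the argument. I expect the only real work to be the modular bookkeeping — correctly identifying the single residue class missed by each block and keeping the index convention (residues named in $\{1,\dots,m\}$) consistent throughout — rather than anything conceptually subtle.
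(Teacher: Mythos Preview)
Your proof is correct and follows essentially the same approach as the paper: both arguments establish the key identity $\Mhat_i = |\lambda| - M_{m+1-i}$ and then subtract to obtain $\Mhat_i - \Mhat_{i+1} = \Sigma_{m-i}$. The only difference is that you supply the explicit modular bookkeeping (identifying the one residue class missed by each block of $m-1$ positions) that the paper compresses into a single ``Observe that''.
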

\begin{proof}
For $1\le i \le m$, let $M_i$ be the sum of all parts of $\lambda$ with indices congruent to $i\pmod m$, and let $\Mhat_i$ be the sum of all parts of $\lhat$ with indices congruent to $i\pmod m$. Observe that $\Mhat_i=|\lambda|-M_{m-i+1}$. Thus, for $1\le i \le m-1$, $\Mhat_i-\Mhat_{i+1}=\left(|\lambda|-M_{m-i+1}\right)-\left(|\lambda|-M_{m-i}\right)=M_{m-i}-M_{m-i+1}=\Sigma_{m-i}$.
\end{proof}

\section{Main results and proofs}

\subsection{A refinement of Theorem~\ref{MacM}}

Here is our refinement of Theorem~\ref{MacM}:

\begin{thm} \label{thm:orig}
Let $n,m_1,m_2$ be nonnegative integers.
\begin{itemize}
    \item Let $B_1\left(m_1,m_2,n\right)$ be the number of partitions of $n$ into parts congruent to $0,$ $2,$ $3,$ or $4 \pmod 6$, where there are $m_1$ parts congruent to $2 \pmod 6$ and $m_2$ parts congruent to $4 \pmod 6$.
    \item Let $B_2\left(m_1,m_2,n\right)$ be the number of partitions of $n$ where no consecutive integers appear as parts and all parts are at least 2, where there are $m_2$ parts congruent to $1 \pmod 3$ and $m_1$ parts congruent to $2 \pmod 3$.
\end{itemize}
Then, $B_1\left(m_1,m_2,n\right)=B_2\left(m_1,m_2,n\right)$.

Equivalently, we have the following generating function representation:
\begin{equation}
\sum_{\lambda \in \Lambda} A^{\#_{2,3}(\lambda)} C^{\#_{1,3}(\lambda)} q^{|\lambda|} = \frac{1}{\left(q^3;q^3\right)_{\infty}\left(Aq^2;q^6\right)_\infty\left(Cq^4;q^6\right)_\infty}
\end{equation}
where $\Lambda$ is the set of all sequence-avoiding partitions with no parts equal to 1, the letters $A$ and $C$ keep track of the number of parts congruent to $2 \pmod 3$ and $1 \pmod 3$, and $\#_{1,3}(\lambda)$ and $\#_{2,3}(\lambda)$ are the number of parts of $\lambda$ congruent to $1 \pmod 3$ and $2 \pmod 3$, respectively.
\end{thm}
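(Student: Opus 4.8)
The plan is to construct an explicit, weight-preserving bijection that carries the statistic $(m_1,m_2)$ along, obtained by chaining Theorem~\ref{XKThm}, Lemmas~\ref{lem:conj} and~\ref{lem:dup2}, and the conjugation bijection behind $A_2(n)=A_3(n)$, treating the parts of a $B_1$-partition divisible by $3$ separately from the parts $\equiv \pm 2 \pmod 6$, exactly as foreshadowed in the introduction.

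\emph{From $B_1$ to partitions with no part occurring exactly once.} Given a partition counted by $B_1(m_1,m_2,n)$, collect its parts $\equiv 0 \pmod 3$ into a partition $3\pi$ (so $\pi$ is an arbitrary partition), and collect its parts $\equiv \pm 2\pmod 6$; halving the latter parts yields a $3$-regular partition $\tau$ whose $m_1$ parts $\equiv 1\pmod 3$ come from the parts $\equiv 2\pmod 6$ and whose $m_2$ parts $\equiv 2\pmod 3$ come from the parts $\equiv 4\pmod 6$, so $\tau$ has $3$-length type $(m_1,m_2)$ and $n = 3|\pi| + 2|\tau|$. By Theorem~\ref{XKThm} with $m=3$, replace $\tau$ by a partition $\tau'$ of the same size with no part occurring three or more times and with $3$-alternating sum type $(m_1,m_2)$. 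Let $E$ be obtained from $\tau'$ by doubling every multiplicity (replacing each part by $m-1=2$ copies): then $E$ has every multiplicity in $\{0,2,4\}$ and, by Lemma~\ref{lem:dup2}, has $3$-alternating sum type $(m_2,m_1)$, with $|E|=2|\tau|$. Let $T$ be obtained from $\pi$ by tripling every multiplicity, so $T$ has every multiplicity divisible by $3$ and $|T|=3|\pi|$. Finally put $\mu = E\cup T$.

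\emph{Why $\mu$ works, and invertibility.} Each multiplicity in $\mu$ is an element of $\{0,2,4\}$ plus a multiple of $3$, hence never $1$, so $\mu$ has no part occurring exactly once. Here one needs the observation that adjoining a partition all of whose multiplicities are divisible by $3$ does not change the $3$-alternating sum type: a run of $3s$ equal consecutive parts occupies $3s$ consecutive positions, which meet each residue class mod $3$ exactly $s$ times and shift all later parts by a multiple of $3$. Hence $\mu$ has $3$-alternating sum type $(m_2,m_1)$ and $|\mu|=n$. The construction reverses: from any partition $\mu$ of $n$ with no part occurring exactly once and $3$-alternating sum type $(m_2,m_1)$, split each multiplicity $k_v \ge 2$ as $k_v = e_v + t_v$ with $e_v$ the unique member of $\{0,2,4\}$ satisfying $e_v \equiv k_v \pmod 3$ and $t_v$ a nonnegative multiple of $3$; this recovers $E$ and $T$, hence $\tau'$, $\tau$, $\pi$, and the original $B_1$-partition. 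Thus $B_1(m_1,m_2,n)$ equals the number of partitions of $n$ with no part occurring exactly once and $3$-alternating sum type $(m_2,m_1)$.

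\emph{From there to $B_2$.} Conjugation sends the partitions of $n$ with no part occurring exactly once bijectively to the sequence-avoiding partitions of $n$ with all parts $\ge 2$ (the $A_2(n)=A_3(n)$ part of Theorem~\ref{MacM}), and by Lemma~\ref{lem:conj} with $m=3$ the conjugate of a partition with $3$-alternating sum type $(m_2,m_1)$ has $3$-length type $(m_2,m_1)$, i.e.\ $m_2$ parts $\equiv 1\pmod 3$ and $m_1$ parts $\equiv 2\pmod 3$ --- precisely the partitions counted by $B_2(m_1,m_2,n)$. Chaining the bijections gives $B_1(m_1,m_2,n)=B_2(m_1,m_2,n)$, and the generating-function form is the same statement with $A,C$ recording $m_1,m_2$ (the factor $(q^3;q^3)_\infty^{-1}$ tracks $\pi$ and $(Aq^2,Cq^4;q^6)_\infty^{-1}$ tracks the doubled $\tau$). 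I expect the only genuine obstacle to be the merge step: one should isolate and prove the small lemma that adjoining a partition with all multiplicities divisible by $m$ preserves the $m$-alternating sum type, and check that $k_v \mapsto (e_v,t_v)$ really is a bijection onto pairs (multiplicities in $\{0,2,4\}$, multiplicities divisible by $3$); the remaining steps are routine bookkeeping around the quoted results.
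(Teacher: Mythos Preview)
Your proof is correct and follows essentially the same approach as the paper: separate the $B_1$-partition into its multiples of $3$ and its parts $\equiv\pm 2\pmod 6$, halve the latter, apply Theorem~\ref{XKThm}, double multiplicities (invoking Lemma~\ref{lem:dup2}), merge with the tripled multiples-of-$3$ piece, and conjugate via Lemma~\ref{lem:conj}. Your treatment is in fact slightly more explicit than the paper's at two points the paper leaves to the reader --- the observation that adjoining a partition with all multiplicities divisible by $3$ preserves the $3$-alternating sum type, and the unique decomposition $k_v=e_v+t_v$ that makes the merge invertible.
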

\begin{proof}
Let $\lambda$ be a partition of $n$ into parts congruent to $0 \pmod 3$ or $2$ or $4 \pmod 6$ with $m_1$ parts congruent to $2 \pmod 6$ and $m_2$ parts congruent to $4 \pmod 6$. We will now demonstrate how to change $\lambda$ into a sequence-avoiding partition $\tau$ with no parts equal to 1 with $m_2$ parts congruent to $1 \pmod 3$ and $m_1$ parts congruent to $2 \pmod 3$.

Let $\mu$ be the partition formed by taking only the parts of $\lambda$ congruent to $2$ or $4 \pmod 6$ and dividing them by 2. We can see that $\mu$ is a 3-regular partition of length type $\left(m_1,m_2\right)$, and $|\mu|=\ds\frac{|\lambda|_{2(6)}+|\lambda|_{4(6)}}2$. We use the bijection of Theorem~\ref{XKThm} to send this to a partition $\nu$ with at most two occurrences of each part with $3$-alternating sum type $\left(m_1,m_2\right)$. Next, we duplicate each part of $\nu$ (replace each part $i$ by $i+i$) to obtain a partition $\pi$ in which each part occurs exactly 2 times or exactly 4 times. Observe that $|\pi|=|\lambda|_{2(6)}+|\lambda|_{4(6)}$. By Lemma~\ref{lem:dup2}, we see that the $3$-alternating sum type of $\pi$ is $\left(m_2,m_1\right)$. Next, we create partition $\rho$ by taking the leftover parts of $\lambda$ that are congruent to $0\pmod 3$, replacing each part $3i$ with $i+i+i$, and combining these parts with $\pi$, so $|\rho|=|\lambda|_{2(6)}+|\lambda|_{4(6)}+|\lambda|_{0(3)}=|\lambda|$. Moreover, $\rho$ is a partition in which no part occurs once --- but other than that, there are no restrictions on the multiplicities of parts. Additionally, $\rho$ still has $3$-alternating sum type $\left(m_2,m_1\right)$. To conclude our map, we take the conjugate of $\rho$ to obtain a partition $\tau$. By Lemma~\ref{lem:conj}, the $3$-length type of $\tau$ is $\left(m_2,m_1\right)$, which means it has $m_2$ parts congruent to $1 \pmod 3$ and $m_1$ parts congruent to $2 \pmod 3$, as desired. Since $\rho$ has no parts with multiplicity 1, $\tau$ is a sequence-avoiding partition with no parts equal to 1.

It is easily seen that the map above is reversible. For a sequence-avoiding partition of $n$ without 1s, first take its conjugate, which will be a partition where no parts occur exactly once. Then, for this partition, for each part $i$ with multiplicity $\ge 3$, delete subpartitions of the form $i+i+i$, until a partition is obtained where parts that appear only have multiplicity 2 or multiplicity 4. Delete every other part to obtain a partition where each part occurs at most twice, and run the bijection of Theorem~\ref{XKThm} in the other direction to obtain a 3-regular partition. Multiply each of these parts by 2, and then, for each deleted triple of parts $i+i+i$, restore a part $3i$.
\end{proof}

The following table illustrates Theorem~\ref{thm:orig} in the case of $n=15$. The first column contains the seven different possibilities of $\left(m_1,m_2\right)$, while the second and third columns contain the partitions of 15 corresponding to those values of $\left(m_1,m_2\right)$ for $B_1\left(m_1,m_2,15\right)$ and $B_2\left(m_1,m_2,15\right)$, respectively.
\begin{center}
\begin{tabular}{ |c | c | c | } \hline
 $\left(m_1,m_2\right)$ & $B_1\left(m_1,m_2,15\right)$ & $B_2\left(m_1,m_2,15\right)$ \\  \hline
 $(0,0)$ & 15  & 15 \\  
         & 12+3  & 12+3 \\  
         & 9+6   & 9+6 \\  
         & 9+3+3  & 9+3+3 \\  
         & 6+6+3  & 6+6+3  \\ 
         & 6+3+3+3  & 6+3+3+3  \\
         & 3+3+3+3+3  & 3+3+3+3+3 \\\hline
$(1,1)$  & 10+3+2 &  13+2 \\
         & 9+4+2 &  11+4 \\
         & 8+4+3 & 10+5 \\
         & 6+4+3+2 & 9+4+2 \\
         & 4+3+3+3+2 & 7+5+3 \\ \hline
$(3,0)$  & 9+2+2+2 & 11+2+2 \\
         & 8+3+2+2 & 9+2+2+2 \\
         & 6+3+2+2+2 & 8+5+2 \\
         & 3+3+3+2+2+2 & 5+5+5 \\\hline
$(0,3)$  & 4+4+4+3 & 7+4+4 \\ \hline
$(2,2)$  & 4+4+3+2+2 & 7+4+2+2 \\ \hline
$(4,1)$  & 4+3+2+2+2+2 & 7+2+2+2+2 \\ \hline
$(6,0)$  & 3+2+2+2+2+2+2 & 5+2+2+2+2+2 \\ \hline
\end{tabular}
\end{center}

We also illustrate the process of mapping $\lambda$ to $\tau$ for $\lambda= 39+38+34+28+26+26+18+16+3+2$, which is a partition of $n=230$ into parts congruent to $0,$ $2,$ $3,$ or $4 \pmod 6$, where there are 4 parts congruent to $2 \pmod 6$ and 3 parts congruent to $4 \pmod 6$. The process for obtaining $\mu,$ $\pi$, $\rho$, and $\tau$ using the process given in the above proof should be straightforward.  In this paper, we do not give the algorithm for mapping $\nu$ to $\pi$. However, but we have chosen our example so that $\nu$ and $\pi$ match the sample $\lambda$ and $\mu$ given in section 2 of~\cite{FTY}, for which Fu, Tang, and Yee have given a thorough demonstration of the required bijection. The interested reader may want to work through the example in that paper.
\begin{align*}
\lambda &= 39+38+34+28+26+26+18+16+3+2 \\ 
\mu &= 19+17+14+13+13+8+1 \\
\nu &= 11+10+9+9+8+8+6+5+5+4+4+2+2+1+1  \\ 
\pi &= 11+11+10+10+9+9+9+9+8+8+8+8+6+6 \\
& \quad +5+5+5+5+4+4+4+4+2+2+2+2+1+1+1+1\\
\rho &= 13+13+13+11+11+10+10+9+9+9+9+8+8+8+8+6+6+6+6+6 \\
& \quad +5+5+5+5+4+4+4+4+2+2+2+2+1+1+1+1+1+1+1\\
\tau &= 39+32+28+28+24+20+15+15+11+7+5+3+3
\end{align*}
As desired, $\tau$ is also a partition of $n=230$ with 3 parts congruent to $1 \pmod 3$ and 4 parts congruent to $2 \pmod 3$. Observe that $\mu$ has 3-length type $(4,3)$, and so $\nu$ has 3-alternating sum type $(4,3).$ Also, $\pi$ has 3-alternating sum type $(3,4)$.

\subsection{A three-color identity and Theorem~\ref{IntroFriend}}

But we can prove even more from Theorem~\ref{thm:orig}!

Suppose we consider partitions into three colors, $a$, $b$, and $c$, where the parts are arranged in the following order:
\begin{equation}
1_a < 1_b < 1_c < 2_a < 2_b < 2_c < 3_a < \cdots
\end{equation}

Let us take the sequence-avoiding partitions from Theorem~\ref{thm:orig}, and map parts of the form $3k$ to $k_b$, parts of the form $3k-1$ to $k_a$, and parts of the form $3k+1$ to $k_c$. This sends $\Lambda$ to the set of three-colored partitions $\Lambda^\star$ without occurrences of consecutive parts. That is, we forbid $j_a + j_b$, $j_b+j_c$, and $j_c + (j+1)_a$ from appearing in our partitions.

Let $|\lambda|$ be the sum of the parts of $\lambda$, ignoring color. Let $\#_a(\lambda)$ be the number of parts of $\lambda$ with color $a$, and define $\#_c(\lambda)$ similarly. Using these definitions, we then have the following theorem:

\begin{thm}
\begin{equation}
\sum_{\lambda \in \Lambda^\star} A^{\#_a(\lambda)} C^{\#_c(\lambda)} q^{|\lambda|} = \frac{1}{(q;q)_{\infty}\left(Aq;q^2\right)_\infty\left(Cq;q^2\right)_\infty}
\end{equation}
\end{thm}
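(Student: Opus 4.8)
The plan is to derive this identity from Theorem~\ref{thm:orig} by carrying its generating-function form through the map $\Lambda \to \Lambda^\star$ defined just above. That map is built from the order isomorphism $\{2,3,4,\dots\} \to \{1_a < 1_b < 1_c < 2_a < \cdots\}$ sending $3k \mapsto k_b$, $3k-1 \mapsto k_a$, $3k+1 \mapsto k_c$, and --- as was already observed --- it sends the defining conditions of $\Lambda$ (all parts $\ge 2$, no two consecutive integers) exactly onto the defining conditions of $\Lambda^\star$ (no occurrence of $j_a+j_b$, of $j_b+j_c$, or of $j_c+(j+1)_a$). Hence it is a bijection of $\Lambda$ onto $\Lambda^\star$, and it manifestly preserves the statistics $\#_a = \#_{2,3}$ and $\#_c = \#_{1,3}$. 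The one thing that changes is the weight, so the whole proof reduces to tracking it.

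To track it I would go part by part. A part $3k$ of $\lambda \in \Lambda$ becomes $k_b$ and so contributes $3k$ to $|\lambda|$ but $k$ to $|\lambda^\star|$; a part $3k-1$ becomes $k_a$, contributing $3k-1$ versus $k$; a part $3k+1$ becomes $k_c$, contributing $3k+1$ versus $k$. Summing over all parts gives the clean relation
\begin{equation}
3\,|\lambda^\star| \;=\; |\lambda| \;+\; \#_{2,3}(\lambda) \;-\; \#_{1,3}(\lambda).
\end{equation}
Substituting $q \mapsto q^3$ in the left-hand side of the theorem and reindexing along the bijection therefore turns it into
\begin{equation}
\sum_{\lambda^\star \in \Lambda^\star} A^{\#_a(\lambda^\star)} C^{\#_c(\lambda^\star)} q^{3|\lambda^\star|}
\;=\;
\sum_{\lambda \in \Lambda} A^{\#_{2,3}(\lambda)} C^{\#_{1,3}(\lambda)} q^{\,|\lambda|+\#_{2,3}(\lambda)-\#_{1,3}(\lambda)} .
\end{equation}

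The right-hand side above is exactly what results from the substitution $A \mapsto Aq,\ C \mapsto Cq^{-1}$ in the generating function $\sum_{\lambda \in \Lambda} A^{\#_{2,3}(\lambda)}C^{\#_{1,3}(\lambda)}q^{|\lambda|}$ of Theorem~\ref{thm:orig}. That substitution is legitimate at the level of formal power series: every part counted by $C$ in that generating function is $\ge 4$, so the coefficient of $C^m$ there is supported in $q$-degrees $\ge 4m$, and multiplying it by $q^{-m}$ keeps it a power series. Applying the same substitution to the product side $\dfrac{1}{(q^3;q^3)_\infty (Aq^2;q^6)_\infty (Cq^4;q^6)_\infty}$ of Theorem~\ref{thm:orig} replaces $(Aq^2;q^6)_\infty$ by $(Aq^3;q^6)_\infty$ and $(Cq^4;q^6)_\infty$ by $(Cq^3;q^6)_\infty$, yielding $\dfrac{1}{(q^3;q^3)_\infty (Aq^3;q^6)_\infty (Cq^3;q^6)_\infty}$. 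But this is precisely what the claimed identity becomes after replacing $q$ by $q^3$; since $q \mapsto q^3$ is injective on formal power series, the theorem follows.

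I do not anticipate a genuine difficulty here --- the argument is a bijective reindexing followed by a change of variables. The only step demanding attention is the weight bookkeeping that produces $3|\lambda^\star| = |\lambda| + \#_{2,3}(\lambda) - \#_{1,3}(\lambda)$, together with the accompanying check that the substitution $C \mapsto Cq^{-1}$ introduces no negative powers of $q$, which is guaranteed precisely because every part tracked by $C$ is at least $4$.
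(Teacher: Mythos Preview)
Your proposal is correct and follows exactly the route the paper intends: the theorem is stated immediately after the bijection $\Lambda \to \Lambda^\star$ with no separate proof, and you have simply made explicit the weight bookkeeping (the relation $3|\lambda^\star| = |\lambda| + \#_{2,3}(\lambda) - \#_{1,3}(\lambda)$ and the corresponding substitution $A\mapsto Aq$, $C\mapsto Cq^{-1}$) that the paper leaves to the reader. Your care about the validity of $C\mapsto Cq^{-1}$ is a nice touch that the paper does not mention.
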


We can now use this three-colored partition identity to produce Theorem~\ref{IntroFriend}. For a partition in $\Lambda^\star$, simply send parts of the form $k_a$ to $3k-2$, parts of the form $k_b$ to $3k$, and parts of the form $k_c$ to $3k+2$. This produces the ordering 
\begin{equation}
1 <' 3 <' 5 <' 4 <' 6 <' 8 <' 7 <'  9 <' 11 <' 10 \cdots,
\end{equation}
where 2 does not appear, and adjacent parts under the above ordering are not allowed to both occur. (For example, under these rules, a partition is not allowed to contain $8+6$, but it may contain $7+6$.) Then, we can see that Theorem~\ref{IntroFriend} directly follows from this dilation. In fact, we have the following refinement:

\begin{thm}
Let $n,m_1,m_2$ be nonnegative integers.
\begin{itemize}
    \item Let $C_1\left(m_1,m_2,n\right)$ be the number of partitions of $n$ into parts congruent to $0,$ $1,$ $3,$ or $5 \pmod 6$, where there are $m_1$ parts congruent to $1 \pmod 6$ and $m_2$ parts congruent to $5 \pmod 6$.
    \item Let $C_2\left(m_1,m_2,n\right)$ be the number of partitions of $n$ that satisfy the following conditions:
    \begin{itemize}
        \item If adjacent parts differ by exactly 1, the smaller part cannot be $\equiv 1 \pmod 3$. (Equivalently, if adjacent parts differ by exactly 1, their sum cannot be $\equiv 0 \pmod 3$).
        \item If adjacent parts differ by exactly 2, the smaller part must be $\equiv 2 \pmod 3$. (Equivalently, if adjacent parts differ by exactly 2, their sum must be $\equiv 0 \pmod 3$).
        \item No parts are equal to 2.
        \item There are $m_1$ parts congruent to $1 \pmod 3$ and $m_2$ parts congruent to $2 \pmod 3$.
    \end{itemize}
\end{itemize}
Then, $C_1\left(m_1,m_2,n\right)=C_2\left(m_1,m_2,n\right)$.
\end{thm}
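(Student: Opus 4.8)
The plan is to prove this refinement exactly as Theorem~\ref{IntroFriend} was derived above from the three-color identity, simply retaining the variables $A$ and $C$ throughout so that they record $m_1$ and $m_2$. Concretely, I would show that both $\sum_{m_1,m_2,n\ge 0}C_1\!\left(m_1,m_2,n\right)A^{m_1}C^{m_2}q^n$ and $\sum_{m_1,m_2,n\ge 0}C_2\!\left(m_1,m_2,n\right)A^{m_1}C^{m_2}q^n$ equal
\[
\frac{1}{\left(q^3;q^3\right)_\infty\left(Aq;q^6\right)_\infty\left(Cq^5;q^6\right)_\infty},
\]
after which the theorem follows by comparing coefficients of $A^{m_1}C^{m_2}q^n$. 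The identity for the $C_1$ side is immediate from the product: $\left(q^3;q^3\right)_\infty^{-1}$ generates the multiples of $3$ (the parts $\equiv 0$ or $3\pmod 6$, which are not tracked), $\left(Aq;q^6\right)_\infty^{-1}$ generates the parts $\equiv 1\pmod 6$ with $A$ counting them, and $\left(Cq^5;q^6\right)_\infty^{-1}$ generates the parts $\equiv 5\pmod 6$ with $C$ counting them.

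For the $C_2$ side I would start from the three-color identity and apply the dilation $k_a\mapsto 3k-2$, $k_b\mapsto 3k$, $k_c\mapsto 3k+2$ of $\Lambda^\star$, which (see the verification below) is a bijection onto the family of partitions enumerated by $C_2$. Under it a color-$a$ part becomes a part $\equiv 1\pmod 3$ and a color-$c$ part becomes a part $\equiv 2\pmod 3$, so a $\lambda\in\Lambda^\star$ with $\#_a(\lambda)=p$ color-$a$ parts, $\#_c(\lambda)=s$ color-$c$ parts, and uncolored weight $|\lambda|=w$ is sent to a partition with $p$ parts $\equiv 1\pmod 3$, with $s$ parts $\equiv 2\pmod 3$, and of weight $3w-2p+2s$. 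Consequently the substitution $q\mapsto q^3$, $A\mapsto Aq^{-2}$, $C\mapsto Cq^2$ turns the left side $\sum_{\lambda\in\Lambda^\star}A^{\#_a(\lambda)}C^{\#_c(\lambda)}q^{|\lambda|}$ of the three-color identity into $\sum_{m_1,m_2,n\ge 0}C_2\!\left(m_1,m_2,n\right)A^{m_1}C^{m_2}q^n$ and its right side into the displayed product, which is what was needed.

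The step that needs genuine care --- and the main obstacle --- is checking that the dilation really is a bijection from $\Lambda^\star$ onto the set of partitions enumerated by $C_2$ (no part equal to $2$, together with the two adjacency conditions). On labels, $k_a,k_b,k_c\mapsto 3k-2,3k,3k+2$ is a bijection from $\{k_a,k_b,k_c:k\ge 1\}$ onto the positive integers other than $2$, and it carries the three forbidden configurations $j_a+j_b$, $j_b+j_c$, $j_c+(j+1)_a$ of $\Lambda^\star$ to the value-pairs $\{3j-2,3j\}$, $\{3j,3j+2\}$, $\{3j+1,3j+2\}$; these are exactly the pairs of integers differing by $1$ or $2$ whose smaller element lies in a residue class that a $C_2$ condition forbids (smaller $\equiv 1\pmod 3$ at difference $1$, and smaller $\equiv 0$ or $\equiv 1\pmod 3$ at difference $2$). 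One then reconciles ``no such pair occurs'' with the way $C_2$ is phrased in terms of \emph{adjacent} parts: if a partition obeying the $C_2$ conditions contained a pair $\{u,u+2\}$ of the above type, the intermediate value $u+1$ would have to be absent --- its presence would create the forbidden difference-$1$ adjacency $\{u+1,u+2\}$ with $u+1\equiv 1\pmod 3$ --- so that $\{u,u+2\}$ would itself be a forbidden adjacency; hence outlawing such pairs is equivalent to the stated conditions. One finally notes that $2$ never occurs in an image and that every partition counted by $C_2$ does arise. All of this is essentially the content of the discussion preceding the statement; the only new ingredient is tracking the statistics $\#_a\leftrightarrow m_1$ and $\#_c\leftrightarrow m_2$ (and the weight), which is routine.
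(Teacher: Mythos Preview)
Your proposal is correct and follows the same route as the paper: the refinement is obtained by carrying the variables $A$ and $C$ through the dilation $k_a\mapsto 3k-2$, $k_b\mapsto 3k$, $k_c\mapsto 3k+2$ of the three-color identity, exactly as in the derivation of Theorem~\ref{IntroFriend}. Your write-up is in fact more detailed than the paper's (which leaves the refinement as an immediate consequence of the preceding discussion); the only minor imprecision is that in your intermediate-value argument the forbidden difference-$1$ adjacency is $\{u+1,u+2\}$ when $u\equiv 0\pmod 3$ but $\{u,u+1\}$ when $u\equiv 1\pmod 3$, though the conclusion is the same in both cases.
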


\subsection{A refinement of Theorem~\ref{thm:And}}

We conclude with the corresponding refinement of Theorem~\ref{thm:And}:
\begin{thm} \label{thm:gen}
Let $r$ be a positive integer, and let $n,m_1,\dots,m_{2r}$ be nonnegative integers.
\begin{itemize}
    \item Let $D_1\left(m_1,m_2,\dots,m_{2r},n\right)$ be the number of partitions of $n$ where all parts must be even or congruent to $2r+ 1  \pmod {4r+ 2}$, where, for $1\le i \le 2r$, there are $m_i$ parts congruent to $2i\pmod{4r+2}$.
    \item Let $D_2\left(m_1,m_2,\dots,m_{2r},n\right)$ be the number of partitions of $n$ where the difference between consecutive parts is not in the set $\{1,3,\dots,2r-1\}$ and the smallest odd part is at least $2r+1$, where, for $1\le i \le 2r$,  there are $m_i$ parts congruent to $2r-i+1\pmod{2r+1}$.
\end{itemize}
Then, $D_1\left(m_1,m_2,\dots,m_{2r},n\right)=D_2\left(m_1,m_2,\dots,m_{2r},n\right)$.
\end{thm}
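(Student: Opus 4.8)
The strategy is to generalize the proof of Theorem~\ref{thm:orig} verbatim, replacing modulus $3$ by modulus $2r+1$ and ``even or $\equiv 2r+1$'' by the appropriate analogue. First I would start with a partition $\lambda$ counted by $D_1$, i.e. a partition of $n$ into parts that are even or $\equiv 2r+1 \pmod{4r+2}$, with $m_i$ parts $\equiv 2i \pmod{4r+2}$ for $1\le i\le 2r$. Collect the parts of $\lambda$ that are even but \emph{not} $\equiv 0\pmod{4r+2}$ — these are exactly the parts $\equiv 2i\pmod{4r+2}$ for $1\le i\le 2r$ — and halve each of them to form a partition $\mu$. Halving $2i\pmod{4r+2}$ gives $i \pmod{2r+1}$, so $\mu$ is a $(2r+1)$-regular partition with $(2r+1)$-length type $(m_1,\dots,m_{2r})$. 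Apply Theorem~\ref{XKThm} with $m = 2r+1$ to obtain a partition $\nu$ in which no part occurs $2r+1$ or more times, with $(2r+1)$-alternating sum type $(m_1,\dots,m_{2r})$.

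Next I would replace each part $i$ of $\nu$ by $2r$ copies of $i$, getting a partition $\pi$ with $|\pi| = 2r\,|\nu| = |\mu|\cdot 2r = $ (sum of the relevant even parts of $\lambda$); by Lemma~\ref{lem:dup2} with $m = 2r+1$, the $(2r+1)$-alternating sum type of $\pi$ is the reversal $(m_{2r},m_{2r-1},\dots,m_1)$. The multiplicities of $\pi$ are $2r$ times a number between $1$ and $2r$, i.e. each part occurs a multiple of $2r$ times, between $2r$ and $4r^2$ times; in particular every multiplicity is even and $\ge 2r$. Now fold back in the remaining parts of $\lambda$: the parts $\equiv 0\pmod{4r+2}$ (write each as $(4r+2)j$ and replace it by $2r+1$ copies of $2j$ — this keeps weight and contributes multiplicity $2r+1$, which is odd but $\ge 2r+1$) and the odd parts, which are all $\equiv 2r+1\pmod{4r+2}$ (write each as $(4r+2)j + (2r+1) = (2r+1)(2j+1)$ and replace it by $2r+1$ copies of $2j+1$ — again weight-preserving, multiplicity $2r+1$). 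Call the merged partition $\rho$. Crucially, every part of $\rho$ either (a) came from $\pi$ and has even multiplicity, or (b) was inserted in blocks of exactly $2r+1$ equal parts; merging cases can only add multiplicities, so any part of $\rho$ with odd multiplicity must have received an odd contribution, hence at least one block of $2r+1$, hence multiplicity $\ge 2r+1$. Thus $\rho$ is a partition in which any part of odd multiplicity appears $\ge 2r+1$ times. Adding parts $\equiv 0 \pmod{2r+1}$ does not change the $(2r+1)$-alternating sum type (by the same computation as in the mod-$6$ proof — a block of $2r+1$ equal parts contributes $0$ to every $M_i - M_{i+1}$ once we account for index shifts, which is exactly the content needed), so $\rho$ still has $(2r+1)$-alternating sum type $(m_{2r},\dots,m_1)$.

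Finally, conjugate $\rho$ to get $\tau$. By Lemma~\ref{lem:conj} with $m = 2r+1$, the conjugate of a partition with $(2r+1)$-alternating sum type $(m_{2r},\dots,m_1)$ has $(2r+1)$-length type $(m_{2r},\dots,m_1)$; that is, $\tau$ has $m_{2r-i+1}$ parts $\equiv i\pmod{2r+1}$ for each $i$, equivalently $m_i$ parts $\equiv 2r-i+1\pmod{2r+1}$ — exactly the length statistics in $D_2$. The conjugate of a partition in which every odd-multiplicity part appears $\ge 2r+1$ times is a partition whose consecutive-part differences avoid $\{1,3,\dots,2r-1\}$ with smallest odd part $\ge 2r+1$: this is the conjugation dictionary underlying Andrews's Theorem~\ref{thm:And}, which I would invoke (it is the standard fact that ``any part with odd multiplicity appears $\ge 2r+1$ times'' conjugates to ``differences avoid the odd numbers $< 2r$ and the smallest odd part is $\ge 2r+1$''; this is essentially the $A_2 \leftrightarrow A_3$ step of Theorem~\ref{MacM} in the general setting). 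Reversibility is identical to the mod-$6$ argument: conjugate $\tau$, strip off blocks of $2r+1$ equal parts greedily from each part of multiplicity $\ge 2r+1$ until every surviving part has even multiplicity that is a multiple of $2r$, peel off every $(2r)$-th copy to undo the duplication, run Theorem~\ref{XKThm} backwards, double, and restore the stripped blocks as parts $\equiv 0$ or $\equiv 2r+1 \pmod{4r+2}$ according to whether the block value was even or odd.

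\textbf{Main obstacle.} The delicate point is the bookkeeping in the step producing $\rho$: I must check that greedily extracting blocks of $2r+1$ equal parts from $\rho$ and "delete every other copy" genuinely inverts the construction, i.e. that the parity/divisibility invariants (even multiplicity from $\pi$ versus $2r+1$-blocks from the $\equiv 0$ and $\equiv 2r+1$ parts) pin down a unique decomposition. When $r = 1$ this is the clean "multiplicity $2$ or $4$ vs. triples" dichotomy of Theorem~\ref{thm:orig}; for general $r$ the multiplicities coming from $\pi$ form the set $\{2r, 4r, \dots, 4r^2\}$ and those from the inserted blocks are multiples of $2r+1$, so after removing a maximal number of $(2r+1)$-blocks the residue must be forced to lie in $\{2r,4r,\dots\}$ — I would verify that $\gcd(2r, 2r+1) = 1$ together with the bound "at most $2r$ blocks from each source" makes this decomposition unique, which is where the argument needs the most care. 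Everything else is a routine transcription of the mod-$6$ proof with $3 \mapsto 2r+1$ and $6 \mapsto 4r+2$.
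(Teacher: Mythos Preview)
Your overall architecture---halve the even parts not divisible by $2r+1$ to get a $(2r+1)$-regular $\mu$, apply Theorem~\ref{XKThm}, adjust multiplicities, fold the $(2r{+}1)$-divisible parts back in as blocks of $2r+1$, then conjugate---is exactly the paper's template. The place where your argument breaks is the $\nu\to\pi$ step.

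You replace each part of $\nu$ by $2r$ copies, because that is what Lemma~\ref{lem:dup2} literally needs in order to reverse the $(2r+1)$-alternating sum type. But then $|\pi|=2r\,|\nu|=2r\,|\mu|$, while $|\mu|$ equals \emph{half} of the sum $S$ of the relevant even parts of $\lambda$; hence $|\pi|=rS$, not $S$. Your line ``$|\mu|\cdot 2r=$ (sum of the relevant even parts of $\lambda$)'' is false for every $r\ge 2$, and the resulting $\tau$ does not have weight $n$. There is a second, structural failure hiding behind the one you flagged as your ``main obstacle'': with $\pi$-multiplicities in $\{2r,4r,\dots,4r^{2}\}$, the multiplicities attainable in $\rho$ are exactly $\{2rk+(2r+1)\ell : 0\le k\le 2r,\ \ell\ge 0\}$, and for $r\ge 2$ this set misses small even values (for $r=2$ it misses $2,6,7,11$). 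So your map cannot reach every partition in which each odd multiplicity is at least $2r+1$, and the would-be bijection is not surjective even before worrying about uniqueness of the decomposition.

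The paper's proof instead \emph{duplicates} each part of $\nu$ (two copies, exactly as in the $r=1$ case), so that $|\pi|=2|\mu|=S$ and the $\pi$-multiplicities lie in $\{2,4,\dots,4r\}$. Because $\{0,2,4,\dots,4r\}$ is a complete residue system modulo $2r+1$, every admissible multiplicity $M\notin\{1,3,\dots,2r-1\}$ decomposes \emph{uniquely} as $M=a+(2r+1)b$ with $a\in\{0,2,\dots,4r\}$ and $b\ge 0$; this is what makes the merge-and-strip step invertible and is the correct generalization of the ``multiplicity $2$ or $4$ versus triples'' dichotomy. In short: the move you need is doubling, not $(2r)$-plicating.
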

\begin{proof}
Let $\lambda$ be a partition of $n$ into parts congruent to $0 \pmod {2r+1}$ or $2,4,\dots,4r\pmod{4r+2}$, where, for $1\le i \le 2r$, there are $m_i$ parts congruent to $2i\pmod{4r+2}$. 
We will now demonstrate how to change $\lambda$ into a partition $\tau$ corresponding to the conditions of $D_2\left(m_1,m_2,\dots,m_{2r},n\right)$.

Let $\mu$ be the partition formed by taking only the parts of $\lambda$ congruent to $2,4,\dots,4r\pmod{4r+2}$, and dividing them by 2. We can see that $\mu$ is a $(2r+1)$-regular partition of length type $\left(m_1,m_2,\dots,m_{2r}\right)$, and $|\mu|=\ds\frac{|\lambda|_{2(4r+2)}+\cdots+|\lambda|_{4r(4r+2)}}2$. We use the bijection of Theorem~\ref{XKThm} to send this to a partition $\nu$ with at most $2r$ occurrences of each part with $(2r+1)$-alternating sum type $\left(m_1,m_2\dots,m_{2r}\right)$. Next, we duplicate each part of $\nu$ to obtain a partition $\pi$ in which the multiplicity of each part that occurs is in $\{2,4,\dots,4r\}$. Note that $|\pi|=|\lambda|_{2(4r+2)}+\cdots+|\lambda|_{4r(4r+2)}$. By Lemma~\ref{lem:dup2}, we see that the $(2r+1)$-alternating sum type of $\rho$ is $\left(m_{2r},m_{2r-1},\dots,m_1\right)$. Next, we create partition $\rho$ by taking the leftover parts of $\lambda$ that are congruent to $0\pmod {2r+1}$, replacing each part $(2r+1)i$ with $2r+1$ copies of $i$, and combining these parts with $\pi$, so $|\rho|=|\lambda|_{2(4r+2)}+\cdots+|\lambda|_{4r(4r+2)}+|\lambda|_{0(2r+1)}=|\lambda|$. Moreover, $\rho$ is a partition in which parts are forbidden to have multiplicity in the set $\{1,3,5,\dots,2r-1\}$. Additionally, $\rho$ still has $(2r+1)$-alternating sum type $\left(m_{2r},m_{2r-1},\dots,m_1\right)$. To conclude our map, we take the conjugate of $\rho$ to obtain a partition $\tau$. By Lemma~\ref{lem:conj}, the $\left(m_{2r},m_{2r-1},\dots,m_1\right)$-length type of $\tau$ is $\left(m_{2r},m_{2r-1},\dots,m_1\right)$, as desired. Finally, since $\rho$ has no parts
with multiplicity in the set $\{1,3,5,\dots,2r-1\}$, the difference between consecutive parts of $\tau$ is not in the set $\{1,3,\dots,2r-1\}$ and the smallest odd part of $\tau$ is at least $2r+1$.

As before, it is straightforward to reverse the above map.
\end{proof}

We can convert this to a colored partition identity with $2r+1$ colors, where the parts are ordered
\begin{equation}
1_1 < 1_2 < 1_3 < \cdots <1_{2r+1} < 2_1 < \cdots.
\end{equation}
This can be achieved by mapping, for $k\ge 0$,
\begin{align}
&(2r+1)(k-1)+2i \mapsto k_i, \quad \quad  1 \le i \le 2r \\
&(2r+1)k \mapsto k_{2r+1} 
\end{align}
Then, the difference conditions for $D_2$ become forbidding the following pairs of parts:
\begin{itemize}
\item $j_i$ and ${(j+1)}_\ell$, for $i\le 2r$ and $\ell<i$
\item $j_i$ and $j_{2r+1}$, for $i\le 2r$.
\end{itemize}

If $\Lhat$ is the set of colored partitions satisfying these conditions, and, for $1\le i \le 2r$, $A_i$ is a variable to keep track of the number of parts of $\lambda$ of color $i$ (represented by $ {\#_i(\lambda)} $), then we have

\begin{equation}
\sum_{\lambda \in \Lhat} q^{|\lambda|} \prod_{i=1}^{2r} A_i ^ {\#_i(\lambda)}  =  \frac{1}{(q;q)_{\infty}\prod_{i=1}^{2r} \left(A_i q;q^2\right)_\infty}.
\end{equation}

\section{Acknowledgements}

The author benefited greatly from conversations with George E. Andrews, Shashank Kanade,  William Keith, Karl Mahlburg, and Jan Russell.

\bibliographystyle{plain}
\bibliography{Mac}

\end{document}